\theoremstyle{plain}
\newtheorem{thm}{Theorem}[section]
\newtheorem{prop}[thm]{Proposition}
\newtheorem{lem}[thm]{Lemma}
\theoremstyle{definition}
\newtheorem{defn}[thm]{Definition}
\newtheorem{example}[thm]{Example}
\theoremstyle{remark}
\newcommand{\eqdef}{\stackrel{\scriptscriptstyle\rm def}{=}}
\newcommand{\cM}{\mathcal{M}}
\newcommand{\bR}{{\mathbb R}}
\newcommand{\bN}{{\mathbb N}}
\newcommand{\cG}{{\mathcal G}}
\def\1{1\!\!1}
\def\and{\text{ and }}
                        \def\^{\tilde}
\def\1{1\!\!1}
\def\rv{{\rm rv}}
\newcommand{\bbN}{\mathbb{N}} 
\newcommand{\bbR}{\mathbb{R}} 
\newcommand{\bbT}{\mathbb{T}}
\newcommand{\bbZ}{\mathbb{Z}} 
\let\c@equation\c@thm
\numberwithin{equation}{section}
\title{Entropy spectrum of  rotation classes}
\author{Yan Mary He}
\author{Christian Wolf}
\address{Yan Mary He, Department of Mathematics, University of Toronto, M5S 2E4, Canada}
\email{yanmary.he@mail.utoronto.ca}
\address{Christian Wolf, Department of Mathematics, City College of New York}
\email{cwolf@ccny.cuny.edu}
\thanks{Wolf was partially supported by  a grant from  the Simons Foundation (\#637594 to Christian Wolf)}
\address{}
\email{}
\date{\today}
\begin{document}

\begin{abstract}
	In this note we study the entropy spectrum of rotation classes for collections of finitely many continuous potentials $\varphi_1,\dots,\varphi_m:X\to \bR$ with respect to the set of invariant measures of an underlying 
	dynamical system $f:X\to X$. We show for large classes of dynamical systems and potentials  that these entropy spectra are maximal in the sense that every value between zero and the maximum is attained. We also provide criteria that imply the maximality of the ergodic entropy spectra. For $m$ being large, our results can be interpreted as a complimentary 
	result to the classical Riesz representation theorem in the dynamical context.
\end{abstract}

\maketitle

\section{Introduction}
One of the most fundamental results in analysis is the Riesz representation theorem. One way to state this theorem is that on a locally compact Hausdorff space $X$, the space of positive linear functionals on the space $C_c(X,\bR)$ of compactly supported continuous functions from $X$ to $\bR$ is isomorphic to the space of positive Borel measures, see, e.g.\cite{Ru}. If we additionally assume that $X$ is compact then 
$C_c(X,\bR)=C(X,\bR)$ is separable and any positive measure $\mu$ is completely determined by the values  $\int \varphi_1d\mu, \int \varphi_2d\mu, \int \varphi_3 d\mu, \dots,$ where the sequence $\{\varphi_k\}_{k \ge 1}$ is dense in $C(X,\bR)$ with respect to the supremums norm. Here $C(X,\bR)$ denotes the space of continuous functions from $X$ to $\bR$.

In this paper we consider  continuous dynamical systems $f:X\to X$ on a compact metric space $(X,d)$, and restrict our attention to the set $\cM$ of $f$-invariant Borel probability measures on $X$. We then consider a countable dense set $\{\varphi_k\}_{k \ge 1}$ in $C(X,\bbR)$ and study the ``size'' of the set of measures in $\cM$ whose integrals coincide on the first $m$ potentials in terms of their entropy values (which we call the entropy spectrum \footnote{Note that our terminology is different from the term entropy spectrum which is frequently used in multifractal analysis.}). Surprisingly, even for very large $m$ and a large set of dynamical systems and potentials the entropy spectra are rather large, i.e., a non-trivial  interval $[0,h_{\rm max}]$. We also construct examples where this generic behavior fails and classify the structure of the entropy spectra for general and for ergodic measures.

We now describe our results in more details. Let $f : X \to X$ be a continuous map on a compact metric space $(X,d)$. We assume that $f$ has positive entropy and that the entropy map $\mu \mapsto h_{\mu}(f)$ is upper semi-continuous. Here $h_{\mu}(f)$ denotes the measure-theoretic entropy of $f$ with respect to $\mu$. Let $\{\varphi_k\}_{k \ge 1}$ be a dense sequence in  $C(X,\bbR)$. 
For a fixed integer $m \ge 1$ we consider the $m$-dimensional continuous potential $\Phi = (\varphi_1, ..., \varphi_m) : X \to \bbR^m$. The {\it (generalized) rotation set} of $\Phi$ is given by $${\rm Rot}(\Phi) \eqdef \{ {\rm rv}(\mu) : \mu \in \mathcal{M}  \}$$
where ${\rm rv} : \mathcal{M} \to \bbR^m$ is the rotation vector function defined by $${\rm rv}(\mu) = \left(\int_X \varphi_1 d\mu, ..., \int_X \varphi_m d\mu \right).$$ 
We shall denote ${\rm Rot}(\Phi)$ by ${\rm Rot}(m)$ to emphasize the dependence on the positive integer $m$. Since $\mathcal{M}$ is convex and compact with respect to the weak$^\ast$ topology, it follows that ${\rm Rot}(m)$ is a convex and compact subset of $\bbR^m$. Given $w \in {\rm Rot}(m)$, the set $$\mathcal{M}_{\Phi}(w) = \{\mu \in \mathcal{M} : {\rm rv}(\mu) = w \}$$ is called the {\it rotation class} of $w$. 

For $w \in {\rm Rot}(m)$, the {\it localized entropy} $H(w)$ of $w$ is defined as (see \cite{Jenkinson, KW}) $$H(w) = \sup \{ h_{\mu}(f) : \mu \in \mathcal{M}_{\Phi}(w)\}.$$

Let $\mathcal{M}_E \subset \mathcal{M}$ denote the subset of ergodic  measures.
We say that $\mu$ is a {\it periodic point measure} if  $\mu\in \cM$ is supported on the (finite) orbit of a periodic point. Let $\mathcal{M}_{per}$ denote the set of periodic point measures. For $\mathcal{M'} \subset \mathcal{M}$, we write $$h(\mathcal{M'}) = \{ h_{\mu}(f) : \mu \in \mathcal{M'}\}.$$ 
Let $K$ be a subset of $\bbR^m$. We denote by ${\rm int}(K)$ the interior of $K$ and by ${\rm ri}(K)$ the relative interior of $K$ (see Section 2 for the definitions). 

Our first main theorem is the following.
\begin{thm} \label{thm_one}
Let $f:X\to X$ be a continuous map on a compact metric space $(X,d)$. Suppose that the dynamical system $(X,f)$ has positive topological entropy and that the entropy function $\mu \mapsto h_{\mu}(f)$ is upper semi-continuous. Fix an integer $m \ge 1$. Then
\begin{enumerate}
\item[(a)] Let $\{\varphi_k\}_{k \ge 1}$ be a dense sequence in  $C(X,\bbR)$. Consider the rotation set ${\rm Rot}(m)$ for the $m$-dimensional continuous potential $\Phi = (\varphi_1, ..., \varphi_m) : X \to \bbR^m$.  Suppose $\mathcal{M}_{per}$ is dense in $\mathcal{M}_E$ (or more generally suppose the potential $\Phi$ is {\emph good}, see Definition \ref{def_good}). If $w$ is in the (relative) interior of ${\rm Rot}(m)$, then $H(w)>0$ and $h(\mathcal{M}_{\Phi}(w)) = [0, H(w)]$.

\item[(b)] We have the following four possibilities for rotation vectors $w$ which do not belong to the (relative) interior of a rotation set in $\bbR^m$.
\begin{enumerate}
	\item[(i)] $h(\mathcal{M}_{\Phi}(w)) = \{H(w)\}$,
	\item[(ii)] $h(\mathcal{M}_{\Phi}(w)) = [x, H(w)]$ for some $0<x<H(w)$,
	\item[(iii)] $h(\mathcal{M}_{\Phi}(w)) = [0,H(w)]$, and
	\item[(iv)] $h(\mathcal{M}_{\Phi}(w)) = (x, H(w)]$ for some $0\leq x<H(w)$.
\end{enumerate}
\end{enumerate}
\end{thm}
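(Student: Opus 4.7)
My plan begins with collecting structural properties of the rotation class that drive both parts. For every $w\in{\rm Rot}(m)$, the rotation class $\cM_\Phi(w)={\rm rv}^{-1}(w)$ is non-empty, closed and convex in $\cM$ (since ${\rm rv}$ is weak$^\ast$-continuous and linear), hence weak$^\ast$-compact. The measure-theoretic entropy $\mu\mapsto h_\mu(f)$ is affine on $\cM$ and, by hypothesis, upper semi-continuous, so it attains its supremum $H(w)$ on $\cM_\Phi(w)$. Moreover $h(\cM_\Phi(w))\subseteq[0,H(w)]$ is always an interval containing $H(w)$: if $h_{\mu_1},h_{\mu_2}\in h(\cM_\Phi(w))$, the family $\{t\mu_1+(1-t)\mu_2:t\in[0,1]\}$ stays inside $\cM_\Phi(w)$ by convexity and, by affinity of entropy, sweeps out every value between $h_{\mu_1}$ and $h_{\mu_2}$.

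For part (a) I would next verify $H(w)>0$ on ${\rm ri}({\rm Rot}(m))$. The function $H$ is concave on ${\rm Rot}(m)$ (same affinity plus convexity argument), and $\sup_w H(w)=h_{\rm top}(f)>0$ by the variational principle, so $H(w_0)>0$ for some $w_0$; given any $w\in{\rm ri}({\rm Rot}(m))$, the line through $w_0$ and $w$ exits ${\rm Rot}(m)$ at some $w_1$ with $w$ strictly between $w_0$ and $w_1$, and concavity then gives $H(w)>0$. The main work of (a) is to produce a zero-entropy measure $\nu_0\in\cM_\Phi(w)$. Using density of $\cM_{per}$ in $\cM_E$ together with the identification ${\rm Rot}(m)=\overline{\mathrm{conv}}({\rm rv}(\cM_{per}))$ coming from Krein--Milman and continuity of ${\rm rv}$, I would choose a simplex of vertices $v_0,\dots,v_k\in{\rm Rot}(m)$ around $w$ with $w\in{\rm ri}(\mathrm{conv}(v_0,\dots,v_k))$, approximate each $v_i$ sufficiently closely by a convex combination of rotation vectors of periodic measures, and invoke openness of the ``strict convex combination'' relation to write $w=\sum_i t_i\,{\rm rv}(\mu_i)$ with $t_i>0$ and $\mu_i\in\cM_{per}$. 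The measure $\nu_0=\sum_i t_i\mu_i$ then lies in $\cM_\Phi(w)$ and has entropy zero since each $\mu_i$ does. With a maximizer $\mu_{\max}\in\cM_\Phi(w)$ of entropy $H(w)$ in hand, the affine interpolation $\mu_s=s\mu_{\max}+(1-s)\nu_0$, $s\in[0,1]$, lies in $\cM_\Phi(w)$ and has entropy $sH(w)$, realizing every value in $[0,H(w)]$.

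Part (b) requires no further construction. The preliminary paragraph shows that $h(\cM_\Phi(w))$ is a subinterval of $[0,H(w)]$ containing its supremum $H(w)$, regardless of where $w$ sits. Writing $x\defeq\inf h(\cM_\Phi(w))\in[0,H(w)]$, the possibilities split according to whether $x=H(w)$ (case (i)); whether $0<x<H(w)$ with $x$ attained (case (ii)); whether $x=0$ is attained (case (iii)); or whether $x<H(w)$ is not attained (case (iv), which accommodates both $x=0$ and $x>0$), exhausting the four listed forms.

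The principal obstacle is the construction of $\nu_0$ in part (a). The density-plus-Caratheodory argument above works directly under the hypothesis ``$\cM_{per}$ is dense in $\cM_E$''; extending it to the weaker ``good'' hypothesis of Definition~\ref{def_good} requires replacing $\cM_{per}$ by whatever family of zero-entropy measures that definition supplies and checking that their rotation vectors remain rich enough to express any interior $w$ as a strict convex combination of finitely many of them.
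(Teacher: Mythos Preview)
Your argument for part~(a) is correct and in fact more direct than the paper's. Both proofs start from the same observations---$\cM_\Phi(w)$ is compact convex, entropy is affine and upper semi-continuous, $H$ is concave so $H(w)>0$ on the relative interior---and both use the ``good'' hypothesis to obtain a zero-entropy measure $\nu_0=\sum_i c_i\mu_i\in\cM_\Phi(w)$ as a convex combination of periodic point measures. At this point you simply interpolate affinely between $\nu_0$ and an entropy maximizer $\mu_{\max}$, which immediately sweeps out $[0,H(w)]$. The paper instead introduces an auxiliary potential $\varphi_{m+1}(x)={\rm dist}(x,X')$ (with $X'$ the union of the periodic supports), slices $\cM_\Phi(w)$ by the values of $\int\varphi_{m+1}\,d\mu$, and appeals to continuity of the localized entropy on that slice. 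Your route is shorter and loses nothing here; the paper's slicing device is really only needed later for the \emph{ergodic} spectrum (Theorem~\ref{thm_erg}), where convex interpolation is unavailable. Your hedging about the ``good'' hypothesis is unnecessary: Definition~\ref{def_good} says precisely that each $w\in{\rm ri}({\rm Rot}(\Phi))$ is a convex combination of periodic point rotation vectors, which is exactly what you used.

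For part~(b) you give the easy half---$h(\cM_\Phi(w))$ is always an interval containing its maximum $H(w)$, so it must have one of the four listed forms---and stop. The paper records this as well, but then does the substantive work: it constructs explicit shift examples (using minimal uniquely ergodic subshifts, transitive SFTs, and unions thereof together with distance potentials) realizing each of cases (i), (ii), (iii) at a boundary rotation vector, and remarks that it does not know whether (iv) actually occurs. If part~(b) is read merely as ``these are the logically possible shapes'', your argument suffices; if it is read (as the paper intends) as ``each of (i)--(iii) genuinely occurs for some system and boundary vector, in contrast with the uniform answer in~(a)'', then your treatment is incomplete and you would need to supply such examples.
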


If we fix an invariant measure $\mu \in \mathcal{M}$, the Riesz representation theorem states that $\mu$ is completely determined by the values of the integrals $\int \varphi_1d\mu, \int \varphi_2d\mu, \dots$, where $\{\varphi_k\}_{k \ge 1}$ is dense in $C(X,\bbR)$. For a large but fixed $m \in \bbN$, our theorem can be interpreted as a complementary result to the Riesz representation theorem since the set of measures whose integrals coincide with $\int \varphi_1d\mu,  \dots, \int \varphi_md\mu$ is large from the perspective of entropy. Indeed, part (a) of the theorem is the generic case. More precisely, for fixed $\mu\in \cM$ and $m\in \bN$, the set of potentials $\{\varphi_1, \dots, \varphi_{m} \}$ for which the rotation vector $\rv(\mu)$ lies in the interior of the rotation set ${\rm Rot}(m)$ is open and dense in $C(X, \bbR^m)$ (see Proposition \ref{prop_dense}).

It follows from the Riesz representation theorem that as $m$ tends to infinity, the rotation classes of the rotation set ${\rm Rot}(m)$ form a decreasing sequence of covers of $\mathcal{M}$ whose intersections contain a unique invariant measure. The next theorem states that the entropy spectra may still remain rather large although the rotation classes are shrinking.
\begin{thm} \label{thm_2}
Let $f$ be as in Theorem \ref{thm_one} and suppose $\mathcal{M}_{\rm per}$ is dense in $\mathcal{M}_E$. Let $\{\varphi_k\}_{k \ge 1}$ be a dense sequence in  $C(X,\bbR)$ and for a fixed integer $m \ge 1$, let ${\rm Rot}(m)$ be the rotation set for the $m$-dimensional continuous potential $\Phi = (\varphi_1, ..., \varphi_m) : X \to \bbR^m$. Suppose ${\rm Rot}(m)$ has non-empty interior. Let $w \in {\rm int}({\rm Rot}(m))$ and let $\mu$ be an ergodic measure in $\mathcal{M}_{\Phi}(w)$. Then for each $\ell \ge m+1$, there exists an open and dense subset $S_{\ell}$ of $C(X,\bbR)$ 
such that for  fixed $\ell\geq m+1$ and $\phi_{m+1}\in S_{m+1},\dots,\phi_{\ell}\in S_{\ell}$ 
the rotation vector $(w,\int \phi_{m+1} d\mu,..., \int \phi_{\ell} d\mu)$ lies in the interior of ${\rm Rot}(\ell)$. In particular, the entropy spectrum of the rotation class of $(w,\int \phi_{m+1} d\mu,..., \int \phi_{\ell} d\mu) $ is a closed interval  which contains $[0,h_\mu(f)]$. 
\end{thm}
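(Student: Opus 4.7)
The argument proceeds by induction on $\ell$, and everything reduces to the base case $\ell = m+1$. Throughout, let $\mu\in\mathcal{M}_E$ satisfy ${\rm rv}_\Phi(\mu)=w\in{\rm int}({\rm Rot}(m))$, and set
\[
S_{m+1}\defeq\Bigl\{\phi\in C(X,\bR):\Bigl(w,\int\phi\,d\mu\Bigr)\in{\rm int}({\rm Rot}(\Phi,\phi))\Bigr\}.
\]
Once $S_{m+1}$ has been shown to be open and dense in $C(X,\bR)$, the inductive step is identical: given $\phi_{m+1}\in S_{m+1},\ldots,\phi_{\ell-1}\in S_{\ell-1}$, set $\widetilde\Phi_{\ell-1}=(\varphi_1,\ldots,\varphi_m,\phi_{m+1},\ldots,\phi_{\ell-1})$ and $\widetilde w_{\ell-1}=(w,\int\phi_{m+1}d\mu,\ldots,\int\phi_{\ell-1}d\mu)$; the inductive hypothesis is precisely that $\widetilde w_{\ell-1}\in{\rm int}({\rm Rot}(\widetilde\Phi_{\ell-1}))$, so the base case applied to $(\widetilde\Phi_{\ell-1},\mu,\widetilde w_{\ell-1})$ produces the required $S_\ell$.

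For the base case, the key convex-geometric fact is that since $w\in{\rm int}({\rm Rot}(m))$, the fiber of ${\rm Rot}(\Phi,\phi)$ above $w$ is a closed interval $[a_\phi(w),b_\phi(w)]$ with $a_\phi(w)=\min_{\nu\in\mathcal{M}_\Phi(w)}\int\phi\,d\nu$ and $b_\phi(w)$ the corresponding maximum. Convexity of ${\rm Rot}(\Phi,\phi)$, together with continuity of the convex function $a_\phi$ and the concave function $b_\phi$ at points of ${\rm int}({\rm Rot}(m))$, yields that $(w,t)\in{\rm int}({\rm Rot}(\Phi,\phi))$ if and only if $a_\phi(w)<t<b_\phi(w)$. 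Equivalently, $\phi\in S_{m+1}$ if and only if there exist $\nu_1,\nu_2\in\mathcal{M}_\Phi(w)$ with $\int\phi\,d\nu_1<\int\phi\,d\mu<\int\phi\,d\nu_2$. Openness of $S_{m+1}$ is then immediate, since such strict inequalities persist when $\phi$ is perturbed in sup norm.

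The substantive work lies in proving density. Given $\phi_0\in C(X,\bR)$ and $\varepsilon>0$, the idea is to exploit the richness of $\mathcal{M}_\Phi(w)$ that is forced by $w\in{\rm int}({\rm Rot}(m))$ together with the density of $\mathcal{M}_{per}$ in $\mathcal{M}_E$: choose periodic measures $p_1,\ldots,p_N$ (for $N$ large) whose rotation vectors surround $w$ and are distinct from ${\rm rv}_\Phi(\mu)=w$ (so in particular $p_i\neq\mu$). The polytope
\[
\Sigma=\Bigl\{\alpha\in[0,1]^N:\ \textstyle\sum\alpha_i=1,\ \sum\alpha_i\,{\rm rv}_\Phi(p_i)=w\Bigr\}
\]
is nonempty and of affine dimension at least $N-m-1$, and each $\nu_\alpha=\sum\alpha_ip_i$ lies in $\mathcal{M}_\Phi(w)$. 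A suitably chosen small perturbation $\phi=\phi_0+\varepsilon\psi$ then arranges that the interval $\{\int\phi\,d\nu_\alpha:\alpha\in\Sigma\}$ strictly straddles $\int\phi\,d\mu$, so $\phi\in S_{m+1}$. The hard part, which is the main obstacle of the whole argument, is to verify that the closed ``bad'' set of $\phi$'s for which $\int\phi\,d\mu$ coincides with $a_\phi(w)$ or $b_\phi(w)$ has empty interior; this is handled by a Hahn--Banach separation of $\mu$ from a sufficiently rich family of $\nu_\alpha$'s, together with the freedom to take $N$ arbitrarily large and to choose the $p_i$'s distinct from $\mu$.

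For the final conclusion, once $\widetilde w_\ell\defeq(w,\int\phi_{m+1}d\mu,\ldots,\int\phi_\ell d\mu)\in{\rm int}({\rm Rot}(\ell))$, the hypotheses of Theorem~\ref{thm_one}(a) are met by the $\ell$-dimensional potential $\widetilde\Phi_\ell$ and the rotation vector $\widetilde w_\ell$, giving $h(\mathcal{M}_{\widetilde\Phi_\ell}(\widetilde w_\ell))=[0,H(\widetilde w_\ell)]$. Since $\mu\in\mathcal{M}_{\widetilde\Phi_\ell}(\widetilde w_\ell)$, we have $H(\widetilde w_\ell)\ge h_\mu(f)$, so the entropy spectrum is a closed interval that contains $[0,h_\mu(f)]$.
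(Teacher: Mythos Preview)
Your inductive skeleton, the reduction to $\ell=m+1$, and the fiber characterization of $S_{m+1}$ all match the paper's structure, and your openness argument (persistence of strict inequalities) is a valid alternative to the paper's Hausdorff-continuity statement. The final paragraph deducing the entropy conclusion from Theorem~\ref{thm_one}(a) is also what the paper does.

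The substantive divergence is in the density step. The paper does not argue via Hahn--Banach; it simply invokes the surgery from Proposition~\ref{prop_dense}: since $\mu$ is ergodic and $\cM_{\rm per}$ is dense in $\cM_E$, one takes periodic measures $\mu_i\neq\mu$ weak$^*$-close to $\mu$, and then perturbs $\varphi_{m+1}$ by small Urysohn bumps supported near the orbits ${\rm supp}(\mu_i)$. These bumps move the last coordinates of ${\rm rv}(\mu_i)$ freely while leaving $\int\varphi_{m+1}\,d\mu$ essentially unchanged (because $\mu$ gives the neighborhoods of those orbits arbitrarily small mass), and this manufactures a simplex in $\bR^{m+1}$ with ${\rm rv}(\mu)$ in its interior. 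That is a direct, constructive perturbation.

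Your sketch, by contrast, is more abstract and as written has a soft spot. ``Hahn--Banach separation of $\mu$ from the $\nu_\alpha$'s'' produces a functional that makes $\int\phi\,d\mu$ \emph{extremal} among the $\int\phi\,d\nu_\alpha$, which is exactly the bad direction; it does not by itself show the bad set $C_{\min}\cup C_{\max}$ is nowhere dense. What actually forces empty interior is a different mechanism: one can build $\nu_n\in\cM_\Phi(w)$ that are convex combinations of periodic measures distinct from $\mu$, with $\nu_n\to\mu$ weak$^*$, yet with $\|\nu_n-\mu\|_{TV}$ bounded away from $0$ (since $\mu$ is ergodic and absent from their ergodic decompositions). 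Then $\int\phi_0\,d(\nu_n-\mu)\to 0$ while the dual-norm stays bounded below, which rules out any sup-norm ball inside $C_{\min}$. This is a perfectly good route, but it is not really ``Hahn--Banach separation,'' and your plan does not make the weak$^*$/TV tension explicit. Either make that argument precise, or replace the density paragraph with the Urysohn surgery, which is shorter and avoids the issue entirely.
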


If $w$ is in the relative interior of ${\rm Rot}(m)$, the above theorem still holds except that one needs to work with the smallest affine space that contains ${\rm Rot}(m)$.

We now turn to the {\it ergodic}  entropy spectrum of a rotation class. For $w \in {\rm Rot}(m)$, we set 
$$\mathcal{M}_{\Phi}^E(w) \eqdef  \mathcal{M}_{\Phi}(w) \cap \mathcal{M}_E.$$
\begin{thm} \label{thm_erg}
Let $(X,f)$ be a dynamical system such that $\mathcal{M}_{per}$ is dense in $\mathcal{M}$. Let $\varphi_1,...,\varphi_{m}:X \to \bR$ be H\"older continuous potentials. Assume that every H\"older continuous potential on $X$ has a unique equilibrium state. Let $w \in {\rm ri}({\rm Rot}(m))$. Then the ergodic entropy spectrum $h(\mathcal{M}_{\Phi}^E(w))$ contains the interval $(0,H(w)]$. Moreover, there exists a dense subset $\mathcal{D}$ of ${\rm ri}({\rm Rot}(m))$ such that $h(\mathcal{M}_{\Phi}^E(w))=[0,H(w)]$ for all $w \in \mathcal{D}$. 
\end{thm}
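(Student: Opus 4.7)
My plan for proving the theorem proceeds in two parts: first I establish the inclusion $(0, H(w)] \subseteq h(\mathcal{M}_\Phi^E(w))$ for every $w \in \ri({\rm Rot}(m))$, then I exhibit the dense subset $\mathcal{D}$ where $0$ is also attained.

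To attain the endpoint $H(w)$ by an ergodic measure, I would invoke Lagrange duality for the concave localized-entropy function $H$. Since $w \in \ri({\rm Rot}(m))$, there exists a subgradient $\alpha \in \bR^{m}$ of $H$ at $w$, yielding the duality $P\left(\sum_{i} \alpha_{i}\varphi_{i}\right) = H(w) + \langle \alpha, w \rangle$. Because the potential $\sum_{i} \alpha_{i}\varphi_{i}$ is H\"older, the hypothesis guarantees a unique equilibrium state $\mu_{w}$; the duality forces $\rv(\mu_{w}) = w$ and $h_{\mu_{w}} = H(w)$, and uniqueness forces ergodicity (since the ergodic components of a non-ergodic equilibrium state would themselves be distinct equilibrium states). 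Hence $H(w) \in h(\mathcal{M}_\Phi^E(w))$.

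For $s \in (0, H(w))$, I would construct an ergodic $\nu_{s} \in \mathcal{M}_\Phi(w)$ with $h_{\nu_{s}} = s$ by an orbit-gluing argument. Using density of $\mathcal{M}_{per}$ in $\mathcal{M}$, pick $m+1$ periodic-orbit measures whose rotation vectors $v_{1}, \ldots, v_{m+1}$ form a non-degenerate simplex in ${\rm Rot}(m)$ containing $w$ in its interior, so $w = \sum_{i} \beta_{i} v_{i}$ with $\beta_{i} > 0$. The unique-equilibrium-state hypothesis is known to yield a specification-type shadowing property, which allows me to concatenate long generic orbit segments of $\mu_{w}$ (carrying relative frequency $\lambda$) with orbit segments of each periodic orbit (carrying relative frequency $(1-\lambda)\beta_{i}$) according to an i.i.d.\ Bernoulli label sequence. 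The resulting random-gluing measure $\nu(\lambda)$ is ergodic, has rotation vector $\lambda w + (1-\lambda)\sum_{i} \beta_{i} v_{i} = w$, and has entropy depending continuously on $\lambda$ and on the segment lengths; tuning these parameters so that the entropy traverses an interval strictly containing $s$ and then invoking the intermediate value theorem produces a choice for which $h_{\nu(\lambda)} = s$ exactly.

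For the dense subset $\mathcal{D}$, I set $\mathcal{D} = \ri({\rm Rot}(m)) \cap \rv(\mathcal{M}_{per})$. Density of $\mathcal{M}_{per}$ in $\mathcal{M}$ implies density of $\rv(\mathcal{M}_{per})$ in ${\rm Rot}(m)$, hence in $\ri({\rm Rot}(m))$. For $w \in \mathcal{D}$, the corresponding periodic-point measure is ergodic with $\rv = w$ and entropy $0$, so $0 \in h(\mathcal{M}_\Phi^E(w))$; combined with the preceding two steps this gives $h(\mathcal{M}_\Phi^E(w)) = [0, H(w)]$ for $w \in \mathcal{D}$.

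I expect the main obstacle to be the orbit-gluing step, where the construction must realize \emph{exactly} both $\rv(\nu_{s}) = w$ and $h_{\nu_{s}} = s$ while preserving ergodicity; standard specification-based constructions typically deliver only approximate equality in the rotation vector. The remedy uses the $m$-dimensional linear freedom in the barycentric coordinates $\beta_{i}$ (available precisely because $w$ lies in the relative interior of ${\rm Rot}(m)$) to absorb any first-order drift in the rotation vector introduced during the gluing, combined with the intermediate value theorem in the entropy variable to pin down the exact value $s$.
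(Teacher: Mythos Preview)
Your Steps 1 and 3 are essentially right and match the paper: Jenkinson's theorem (together with the uniqueness hypothesis) yields an ergodic equilibrium state in $\cM_\Phi(w)$ with entropy $H(w)$, and the dense set $\mathcal D=\ri(\R(m))\cap \rv(\cM_{\rm per})$ is exactly what the paper uses.

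The gap is Step 2. Your assertion that ``the unique-equilibrium-state hypothesis is known to yield a specification-type shadowing property'' is not a standard fact and is not supplied by the stated hypotheses; there are systems (e.g.\ general $\beta$-shifts) for which $\cM_{\rm per}$ is dense and H\"older potentials have unique equilibrium states, yet the specification property fails. Even if one grants some form of specification, the Bernoulli-gluing construction you sketch produces ergodic measures whose rotation vectors and entropies are only \emph{approximately} the targets; your proposed fix of ``absorbing the drift'' in the barycentric coordinates is not a known technique for forcing the exact equality $\rv(\nu)=w$, and you yourself flag this as the main obstacle. As written, this step does not go through.

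The paper avoids this difficulty by staying entirely inside Jenkinson's framework. It adds an auxiliary $(m{+}1)$-st H\"older potential $\varphi_{m+1}(x)=\dist(x,X')$, where $X'$ is the union of the supporting periodic orbits, so that the fibre over $w$ in $\R(m{+}1)$ is a segment $\{w\}\times[0,b]$ with $H(w,0)=0$ and $H(w,x_{\max})=H(w)$. For each $s$ in the open segment, $(w,s)\in\ri(\R(m{+}1))$ and Jenkinson's theorem gives a linear combination of $\varphi_1,\dots,\varphi_{m+1}$ whose (unique, hence ergodic) equilibrium state $\mu_s$ satisfies $\rv_{m+1}(\mu_s)=(w,s)$ exactly and $h_{\mu_s}(f)=H(w,s)$. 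Continuity of $s\mapsto H(w,s)$ then sweeps out $(0,H(w)]$ inside $h(\cM_\Phi^E(w))$. The point is that the extra coordinate converts the problem of hitting arbitrary entropy values into the problem of hitting arbitrary rotation vectors in one higher dimension, which is precisely what Jenkinson's theorem does---no orbit-gluing required.
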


There are various classes of dynamical systems for which the uniqueness of equilibrium states for H\"older continuous potentials has been established. For example, this holds if $f$ satisfies Bowen's specification property. For further examples beyond Bowen's specification property, we refer the reader to \cite{CT1} and the references therein.

We remark that there exists a dynamical system $(X, f)$ and a potential $\varphi : X \to \bbR$ such that for every rotation vector $w \in {\rm int}({\rm Rot}(\varphi))$, the ergodic entropy spectrum is a nontrivial interval although the equilibrium states are not unique. Such an example is exhibited in \cite{Lep}. At the end of the paper, we also present an example of a dynamical system $(X,f)$ and a good potential $\Phi : X \to \bbR^m$ such that the ergodic entropy spectrum of $w \in {\rm ri}({\rm Rot}(\Phi))$ equals a nontrivial interval union a finite set of points (see Example \ref{example}).

\section{Entropy spectrum of a rotation class}
In this section we prove Theorems \ref{thm_one} and \ref{thm_2}. Along the way, we prove Proposition \ref{prop_dense} which implies that part (a) of Theorem \ref{thm_one} is the generic case. We continue to use the notation from the previous section. 

Let $K$ be a subset of $\bbR^m$. Recall that the {\em interior} of $K$, denoted by ${\rm int}(K)$, is the union of all subsets of $K$ which are open as subsets of $\bbR^m$. The {\em relative interior} of $K$, denoted by ${\rm ri}(K)$, is the interior of $K$ with respect to the topology on the smallest affine subspace of $\bbR^m$ containing $K$. 

The proof of Theorem \ref{thm_one} consists of Proposition \ref{lem_int} and Proposition \ref{prop_bdry}, one for each part of the theorem. To prove Proposition \ref{lem_int}, given a fixed positive integer $m$ and $w \in {\rm ri}({\rm Rot}(m))$, we study the range of measure-theoretic entropy in the rotation class $\mathcal{M}_{\Phi}(w)$, namely the set $h(\mathcal{M}_{\Phi}(w))$. Since $\mathcal{M}_{\Phi}(w)$ is non-empty, compact and convex, $h(\mathcal{M}_{\Phi}(w))$ is an interval containing its right end point  $H(w)$. It turns out that under certain assumptions this interval is maximal; that is, it equals the closed interval $[0,H(w)]$. In contrast, in the proof of Proposition \ref{prop_bdry}, we construct examples of rotation sets in $\bbR^m$ and rotation vectors lying on the boundary of these rotation sets whose entropy spectra are very different.

We start with the case where $w$ is a rotation vector in the (relative) interior of the rotation set ${\rm Rot}(m)$. We say that $w\in {\rm Rot}(m)$ is a periodic point rotation vector if $\mathcal{M}_{\Phi}(w)$ contains a periodic point measure.

\begin{defn} \label{def_good}
Let $f: X \to X$ be a continuous map on a compact metric space $(X,d)$ and let $\Phi : X \to \bbR^m$ be a continuous potential. We say that the potential $\Phi$ is {\it good} if every rotation vector in the relative interior of the rotation set ${\rm Rot}(\Phi)$ can be written as a convex combination of periodic point rotation vectors.

We remark that there are many classes of dynamical systems $f:X \to X$ for which every continuous potential $\Phi : X \to \bbR^m$ is good. Such classes contain systems which satisfy any one of the following properties:
\begin{enumerate}
	\item[(i)] $\mathcal{M}_{per}$ is dense in $\mathcal{M}_E$.
	\item[(ii)] $(X,f)$ satisfies Bowen's specification property.
	\item[(iii)] $(X,f)$ satisfies the gluing orbit property introduced by Bomfim and Varandas in \cite{BV}.
	\item[(iv)] $(X,f)$ satisfies the closability property introduced by Gelfert and Kwietniak in \cite{GK}.
\end{enumerate}
\end{defn}

It is not difficult to see that if $(X,f)$ satisfies property (i), then every continuous potential $\Phi: X \to \bbR^m$ is good, since the convex hull of ergodic rotation set $$\{{\rm rv}(\mu) : \mu \in \mathcal{M}_E  \}$$
equals the rotation set ${\rm Rot}(m)$ (\cite{KWtoy}). If $(X,f)$ satisfies Bowen's specification property or the gluing orbit property, then $\mathcal{M}_{per}$ is dense in $\mathcal{M}$ and therefore is dense in $\mathcal{M}_E$. If $(X,f)$ satisfies Gelfert and Kwietniak's closability condition, then by Theorem 4.11 in \cite{GK}, $\mathcal{M}_{per}$ is dense in $\mathcal{M}_E$.

We recall the definition of the gluing orbit property which is a fairly new specification-like property introduced in \cite{BV}. A continuous map $f : X \to X$ on a compact metric space $(X,d)$ satisfies the {\it gluing orbit property} if for any $\varepsilon>0$, there exists an integer $N = N(\varepsilon) \ge 1$ such that for any points $x_1, x_2, ..., x_k \in X$ and any positive integers $n_1, ..., n_k$, there are $p_1,...,p_k \le N(\varepsilon)$ and a point $x \in X$ such that $d(f^j(x), f^j(x_1)) \le \varepsilon$ for every $0 \le j \le n_1$ and 
$$d(f^{j+n_1+p_1+...+n_{i-1}+p_{i-1}}(x), f^{j}(x_i)) < \varepsilon$$ for every $2 \le i \le k$ and $0 \le j \le n_i$ (see \cite{BV}, Definition 2.1).

The closability property is a more general version of a property guaranteed by the so-called closing lemma. Roughly speaking, if $K$ is a subset of the set of periodic points of a dynamical system $(X,f)$, then $f$ is said to have the $K$-closability property if for every ergodic measure, there is a generic point $x$ such that for any $\varepsilon>0$ there is an infinite number of $n$ such that $\{x, f(x),...,f^{n-1}(x)  \}$ can be $\varepsilon$-shadowed by $y \in K$ with a period which is roughly $n$. The closability property implies that $\mathcal{M}_{per}$ is dense in $\mathcal{M}_E$ (\cite{GK} Theorem 4.11).
More details on closability can be found in \cite{GK}, Section 4.

There are many examples of dynamical systems satisfying at least one of the above conditions (i)-(iv). The Specification Theorem (\cite{KaHa} Theorem 18.3.9) states that a diffeomorphism restricted to a compact locally maximal hyperbolic set has the specification property. Moreover, mixing interval maps and mixing cocyclic shifts, in particular mixing sofic shifts and mixing subshift of finite type, have the specification property. Every system having the specification property has the gluing orbit property. 
Moreover, a $C^0$-generic system restricted to an isolated chain-recurrent class has the gluing orbit property (\cite{Lima} Corollary 4.2). It is shown in \cite{GK} that every $\beta$-shift or $S$-gap shift is closable with respect to an appropriate subset of the set of periodic points (\cite{GK} Proposition 4.9 and Proposition 4.10).

\begin{prop} \label{lem_int}
Let $f$ be as in Theorem \ref{thm_one}. Let $\{\varphi_k\}_{k \ge 1}$ be a dense sequence in  $C(X,\bbR)$. For a fixed positive integer $m$, consider the rotation set ${\rm Rot}(m)$ for the $m$-dimensional continuous potential $\Phi = (\varphi_1, ..., \varphi_m) : X \to \bbR^m$.  Suppose $\Phi$ is {\emph good}. 
If ${\rm Rot}(m)$ has non-empty interior and if $w \in {\rm int}({\rm Rot}(m))$, then $H(w)>0$ and $h(\mathcal{M}_{\Phi}(w)) = [0, H(w)]$. If ${\rm Rot}(m)$ has empty interior, then the analogous statement holds for $w \in {\rm ri}({\rm Rot}(m))$.
\end{prop}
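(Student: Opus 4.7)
The plan is to decompose the claim into three independent ingredients and then assemble them by a convex-combination argument. First, since ${\rm rv}$ is continuous and affine on $\cM$ and $\cM$ is convex and weak$^{\ast}$-compact, the rotation class $\cM_\Phi(w)$ is convex and weak$^{\ast}$-compact. Because $\mu\mapsto h_\mu(f)$ is affine on $\cM$, the image $h(\cM_\Phi(w))\subset\bR$ is convex, hence an interval; upper semi-continuity together with compactness guarantees that this interval attains its supremum, so there exists $\mu_1\in\cM_\Phi(w)$ with $h_{\mu_1}(f)=H(w)$. Consequently, once I produce (i) a measure $\mu_0\in\cM_\Phi(w)$ with $h_{\mu_0}(f)=0$ and (ii) the strict positivity $H(w)>0$, the affine family $\mu_s=\alpha\mu_0+(1-\alpha)\mu_1$ satisfies ${\rm rv}(\mu_s)=w$ and $h_{\mu_s}(f)=(1-\alpha)H(w)$, so as $\alpha$ ranges over $[0,1]$ the entropy sweeps out all of $[0,H(w)]$.

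For (i) I would invoke the hypothesis that $\Phi$ is good. Since $w\in{\rm ri}({\rm Rot}(m))$ (which coincides with ${\rm int}({\rm Rot}(m))$ when the latter is nonempty), one can write $w=\sum_{i=1}^{k}\lambda_i w_i$ as a convex combination of periodic point rotation vectors $w_i$, each realized by a periodic point measure $\mu_i\in\cM_\Phi(w_i)$. The convex combination $\mu_0=\sum_{i=1}^{k}\lambda_i\mu_i$ lies in $\cM_\Phi(w)$ and, since periodic point measures have zero entropy and entropy is affine, satisfies $h_{\mu_0}(f)=0$.

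For (ii) I would use that the topological entropy is positive and that the entropy map is upper semi-continuous: by the variational principle combined with upper semi-continuity, there exists $\nu\in\cM$ with $h_\nu(f)>0$. Set $w_\nu={\rm rv}(\nu)\in{\rm Rot}(m)$. If $w_\nu=w$ we are done, since then $H(w)\geq h_\nu(f)>0$. Otherwise, because $w$ lies in the relative interior of the convex set ${\rm Rot}(m)$, the ray from $w_\nu$ through $w$ can be prolonged past $w$ without leaving ${\rm Rot}(m)$: there exist $w_\eta\in{\rm Rot}(m)$ and $t\in(0,1)$ with $w=tw_\nu+(1-t)w_\eta$. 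Choosing any $\eta\in\cM_\Phi(w_\eta)$, the measure $t\nu+(1-t)\eta$ belongs to $\cM_\Phi(w)$ and has entropy at least $t\,h_\nu(f)>0$, so $H(w)>0$.

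The main obstacle I anticipate is the geometric step in (ii): one must verify that when $w$ lies in the relative interior of ${\rm Rot}(m)$, the segment from any other point of ${\rm Rot}(m)$ through $w$ really can be extended past $w$ while remaining inside ${\rm Rot}(m)$. This is a standard property of relative interiors of convex sets but should be invoked carefully. A further virtue of this formulation is that, since the definition of \emph{good} is phrased for rotation vectors in the relative interior, the same proof applies uniformly to both cases in the statement, so the clause treating empty-interior ${\rm Rot}(m)$ is a free consequence.
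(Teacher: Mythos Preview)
Your proposal is correct and in fact more streamlined than the paper's own argument. The essential difference lies in how the interval structure of $h(\cM_\Phi(w))$ is established. You invoke directly that $\mu\mapsto h_\mu(f)$ is \emph{affine} on $\cM$, so its image on the convex set $\cM_\Phi(w)$ is automatically convex, i.e.\ an interval; then you only need to exhibit the two endpoints $0$ and $H(w)$, which you do via goodness and via upper semi-continuity plus compactness respectively. The paper instead introduces an auxiliary potential $\varphi_{m+1}(x)={\rm dist}(x,X')$, where $X'$ is the union of the periodic orbits realizing the $w_i$, and studies the induced one-variable localized entropy function $x\mapsto H(x)$ on the interval $I=\{\int\varphi_{m+1}\,d\mu:\mu\in\cM_\Phi(w)\}=[0,b]$; continuity of this function (from concavity together with upper semi-continuity) and the intermediate value theorem then give the full interval $[0,H(w)]$.

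Your route is shorter and avoids the auxiliary potential entirely. The paper's construction, on the other hand, is not wasted effort: the same auxiliary-potential setup is reused verbatim in the proof of Theorem~\ref{thm_erg} on the \emph{ergodic} entropy spectrum, where your affineness shortcut is unavailable because $\cM_E$ is not convex and one must instead produce genuinely ergodic measures (equilibrium states) along the fiber.

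Your argument for $H(w)>0$ is the same idea the paper uses, only spelled out in the language of rays through relative-interior points rather than concavity of the localized entropy function; the underlying convex geometry is identical. The care you flag about extending segments past a relative-interior point is well placed and is exactly the standard fact one needs.
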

\begin{proof}
Suppose ${\rm Rot}(m)$ has non-empty interior. Since $\Phi$ is good, any $w \in {\rm int}({\rm Rot}(m))$ can be written as a convex combination of $m+1$ periodic point rotation vectors $w_1, ..., w_{m+1}$ in ${\rm Rot}(m)$, i.e., $w = c_1w_1 + ... + c_{m+1}w_{m+1}$ for positive real numbers $c_1, ..., c_{m+1}$ with $c_1+\dots+c_{m+1}=1$. 
For each $i=1,...,m+1$, let $\mu_i^*$ be a periodic point measure in the rotation class of $w_i$.

Define $\varphi_{m+1} : X \to \bbR$ by $\varphi_{m+1}(x) = dist(x, X')$, where $X' \subset X$ is the union of the support of the $\mu_i^*, i =1,...,m+1$. That is, $X'$ is the disjoint union of the $m+1$ periodic orbits. We add $\varphi_{m+1}$ to $\varphi_1,\dots, \varphi_m$. It follows that the set $$I = \left \{ \int_X \varphi_{m+1} d\mu : \mu \in \mathcal{M}_{\Phi}(w)\right \}$$ is a compact interval $[0,b]$ for some $b>0$.

For $x \in [0,b]$, we define $H(x)$ to be 
$$H(x) =  \sup\left\{h_\mu(f) : \int_X \varphi_{m+1} d\mu =x \text{ and } \mu \in \mathcal{M}_{\Phi}(w) \right\}.$$
At the endpoint $0$ of $I$, we have $H(0) = 0$ since 
\begin{align*}
H(0) &= \sup\left\{h_\mu(f) : \int_X \varphi_{m+1} d\mu =0 \text{ and } \mu \in \mathcal{M}_{\Phi}(w) \right\}\\
& = \sup \{h_\mu(f) : \mu = c_1\mu^*_1 +...+c_{m+1}\mu^*_{m+1} \}.
\end{align*}
On the other hand, there exists $x_0 \in (0,b]$ such that $H(x_0) = H(w)$ which is the maximum value of $H$ on $[0,b]$. We observe that $H(w)>0$. This follows from the facts that the localized entropy function is concave, $H(w_0)>0$ where $w_0$ is the rotation vector of a measure of maximal entropy of $f$ and $w \in {\rm int}({\rm Rot}(m))$.

Since the entropy map $\mu \mapsto h_{\mu}(f)$ is upper semi-continuous, $H$ is continuous on $[0,b]$. Therefore  $H([0,b])$ is the closed interval $[0, H(w)]$ which implies $h(\mathcal{M}_{\Phi}(w)) = [0, H(w)]$.

If ${\rm Rot}(m)$ has empty interior, we can consider its relative interior. Applying an analogous argument as above, we see that the conclusion remains valid.
\end{proof}


In the following proposition, we will see that there are further possibilities for the entropy spectrum of rotation vectors which do not lie in the (relative) interior of a rotation set.
 
\begin{prop} \label{prop_bdry}
Fix an integer $m \ge 1$. We have the following four possibilities for the entropy spectrum of a rotation vector which does not lie in the (relative) interior of a rotation set in $\bbR^m$:
\begin{enumerate}
\item[(i)] $h(\mathcal{M}_{\Phi}(w)) = \{H(w)\}$,
\item[(ii)] $h(\mathcal{M}_{\Phi}(w)) = [x, H(w)]$ for some $0< x < H(w)$, 
\item[(iii)] $h(\mathcal{M}_{\Phi}(w)) = [0,H(w)]$, and
\item[(iv)] $h(\mathcal{M}_{\Phi}(w)) = (x, H(w)]$ for some $0\leq x<H(w)$.
\end{enumerate}
\end{prop}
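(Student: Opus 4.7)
The plan is to construct four separate examples, one realizing each listed shape of the entropy spectrum. First I record the structural framing: because $\mathcal{M}_\Phi(w)$ is a non-empty compact convex subset of $\mathcal{M}$ and the entropy function is affine and upper semi-continuous, $h(\mathcal{M}_\Phi(w))$ is automatically an interval containing its right endpoint $H(w)$. So the listed four possibilities exhaust the options, and the task reduces to producing one example per case.

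For case (i), I would take $m=1$, the full shift $\sigma\colon\{0,1\}^{\bN}\to\{0,1\}^{\bN}$, and $\varphi_1=\chi_{[0]}$; then $w=1$ forces $\mathcal{M}_\Phi(1)=\{\delta_{\bar 0}\}$, a singleton, so trivially $h(\mathcal{M}_\Phi(w))=\{H(w)\}=\{0\}$. Taking a direct product with a fixed positive-entropy system along an independent coordinate while keeping the $\varphi_1$-constraint gives an example with $H(w)>0$ and $\mathcal{M}_\Phi(w)$ still a singleton, realizing case (i). Case (iii) is obtained by choosing $w$ on a positive-dimensional face $F$ of ${\rm Rot}(m)$ such that $F$ itself arises as the rotation set of an invariant subsystem (or a lower-dimensional rotation set for a related potential) to which Proposition \ref{lem_int} applies; then $h(\mathcal{M}_\Phi(w))=[0,H(w)]$. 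For case (ii), I would take a disjoint union of two invariant subsystems, say a periodic orbit $Y_1$ and a positive-entropy subshift $Y_2$, with $\Phi$ chosen so that some boundary vector $w$ forces every $\mu\in\mathcal{M}_\Phi(w)$ to be a convex combination with a definite positive weight on measures supported on $Y_2$; affinity of entropy then yields $h(\mathcal{M}_\Phi(w))=[x,H(w)]$ with $x>0$.

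Case (iv) is where the real difficulty lies, and it is the main obstacle. We need $w$ on the boundary of ${\rm Rot}(m)$ together with a sequence $\mu_n\in\mathcal{M}_\Phi(w)$ satisfying $h(\mu_n)\searrow x$, while every weak$^\ast$ accumulation point of $\mu_n$, which must lie in $\mathcal{M}_\Phi(w)$ by closedness, has entropy strictly greater than $x$, so that $x$ is not attained. This is possible only because the entropy map is upper semi-continuous but not continuous. I would realize it on a subshift by taking $\mu_n$ to be periodic point measures supported on progressively longer orbits that visit a small positive-entropy invariant piece $Y$ for an asymptotically dominant proportion of time, while their rotation vectors are held equal to the prescribed boundary $w$; then $h(\mu_n)=0$ for every $n$, but any weak$^\ast$ limit is supported on $Y$ and hence has strictly positive entropy. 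A careful bookkeeping along this construction, balancing the rotation constraint against the time-fraction spent in $Y$, yields $h(\mathcal{M}_\Phi(w))=(0,H(w)]$. The hard part is checking that the constraint genuinely forces $w$ to lie on the (relative) boundary and that no measure in the class achieves entropy exactly equal to the infimum.
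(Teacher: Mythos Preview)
Your structural framing is exactly the paper's: since $\mathcal{M}_\Phi(w)$ is compact convex and entropy is affine and upper semi-continuous, $h(\mathcal{M}_\Phi(w))$ is an interval with maximum $H(w)$, so (i)--(iv) exhaust the logical possibilities. For (i)--(iii) your sketches are in the same spirit as the paper's constructions, though less sharp. The paper handles all three uniformly on a full shift by taking $\varphi_1(x)=\dist(x,Y)$ for a carefully chosen invariant subset $Y$ and looking at the single boundary point $w$ with first coordinate $0$: for (i) $Y$ is minimal uniquely ergodic with entropy $\alpha$; for (iii) $Y$ is a transitive SFT; for (ii) $Y$ is the disjoint union of two minimal uniquely ergodic subshifts with distinct positive entropies $h_1<h_2$, giving exactly $[h_1,h_2]$. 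Your version of (ii), with $Y_1$ a periodic orbit and $Y_2$ a positive-entropy subshift, does not work as written: unless $Y_2$ is uniquely ergodic, it carries zero-entropy measures, and then convex combinations with positive weight on $Y_2$ can still have entropy $0$.

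The real issue is case (iv). The paper does \emph{not} construct an example here; after realizing (i)--(iii) it states explicitly that it is unknown whether (iv) can occur. Your proposed construction has a fatal internal inconsistency: you take $\mu_n\in\mathcal{M}_\Phi(w)$ to be periodic point measures, so $h_{\mu_n}(f)=0$ for every $n$. But then $0\in h(\mathcal{M}_\Phi(w))$ outright, and the spectrum cannot be $(0,H(w)]$. The phenomenon you describe (zero-entropy measures converging to a positive-entropy limit) illustrates the failure of lower semi-continuity of entropy, but it does not produce an unattained infimum, because the approximating measures themselves sit in the rotation class and realize the value $0$. To get a genuinely half-open spectrum you would need measures in the class with entropies strictly decreasing to $x$ while \emph{no} measure in the class has entropy exactly $x$; your candidates already have entropy $x=0$. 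This gap is precisely why the paper leaves (iv) open.
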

Since $h(\mathcal{M}_{\Phi}(w))$ is a connected subset of $\bbR$ with maximum value $H(w)$, it is a (possibly trivial) interval with maximum value $H(w)$. The four cases in the proposition are all the possibilities of such intervals. We construct examples in the following three lemmas, one for each of the first three possibilities. However, we do not know if possibility (iv) can be realized.
In the following three lemmas $X$ is a two-sided full shift in $d$ symbols where $d \ge 2$ is an integer. 
\begin{lem}
There exist a rotation set $\mathcal{R} \subset \bbR^m$ and a rotation vector $w$ not in the (relative) interior of $\mathcal{R}$ such that and $h(\mathcal{M}_{\Phi}(w))$ is a singleton.
\end{lem}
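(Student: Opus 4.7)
The plan is to exhibit a rotation vector $w$ whose entire rotation class $\cM_\Phi(w)$ reduces to a single invariant measure; the entropy spectrum is then automatically a singleton. Let $\sigma : X \to X$ denote the full shift on $d$ symbols, and let $\bar 0 \in X$ denote the fixed point $(\dots,0,0,0,\dots)$.

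Choose a continuous function $\varphi_1 : X \to \bbR$ with $\varphi_1 \le 0$ everywhere and $\varphi_1^{-1}(0) = \{\bar 0\}$; for example, set $\varphi_1(x) = -\sum_{n \in \bbZ} 2^{-|n|} \mathbf{1}_{\{x_n \ne 0\}}$. Take $\varphi_2 \equiv \cdots \equiv \varphi_m \equiv 0$ and let $\Phi = (\varphi_1, \varphi_2, \dots, \varphi_m)$. Define $w := {\rm rv}(\delta_{\bar 0}) = (0,0,\dots,0) \in \bbR^m$.

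The decisive observation is that any $\mu \in \cM$ with $\int \varphi_1 \, d\mu = 0$ must be supported on $\varphi_1^{-1}(0) = \{\bar 0\}$, because $\varphi_1 \le 0$; hence $\mu = \delta_{\bar 0}$. In particular $\cM_\Phi(w) = \{\delta_{\bar 0}\}$, so $h(\cM_\Phi(w)) = \{h_{\delta_{\bar 0}}(\sigma)\} = \{0\}$ is the required singleton, realizing case (i) with $H(w) = 0$.

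Finally, one checks that $w$ does not lie in the relative interior of $\mathcal{R} := {\rm Rot}(\Phi)$. The full shift admits invariant measures $\mu \ne \delta_{\bar 0}$ (for instance, any Bernoulli measure charging a symbol other than $0$), each of which satisfies $\int \varphi_1 \, d\mu < 0$. Thus the first coordinate of ${\rm rv}(\mu)$ ranges over a nontrivial interval $[a,0]$ with $a<0$, and $\mathcal{R} = [a,0] \times \{0\}^{m-1}$. Its affine hull is one-dimensional and its relative interior $(a,0) \times \{0\}^{m-1}$ plainly does not contain $w$. The only minor point requiring care is constructing $\varphi_1$ explicitly so that it is both continuous and has a unique zero at $\bar 0$; no serious obstacle arises.
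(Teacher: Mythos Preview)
Your proof is correct and follows the same core idea as the paper's: choose $\varphi_1$ so that its extreme level set supports a unique invariant measure, forcing the corresponding boundary rotation vector to have a singleton rotation class. The paper's construction is a bit more general: instead of the fixed point $\bar 0$, it takes a minimal uniquely ergodic subshift $X_\alpha\subset X$ of entropy $\alpha\ge 0$ (for $\alpha>0$ via Grillenberger's construction), sets $\varphi_1(x)=\dist(x,X_\alpha)$, and lets $\varphi_2,\dots,\varphi_m$ come from a dense sequence in $C(X,\bR)$. This produces $h(\cM_\Phi(w))=\{\alpha\}$ for any prescribed $\alpha\ge 0$, whereas your version only realizes $H(w)=0$; it also keeps the remaining potentials generic rather than identically zero, so the rotation set typically has nonempty interior. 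Apart from this extra flexibility the two arguments are parallel.
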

\begin{proof}
Let $X_{\alpha} \subset X$ be a minimal and uniquely ergodic subshift of $X$ with entropy $\alpha \ge 0$. We note that for $\alpha>0$ examples of such shifts are constructed in \cite{Gr}. For $\alpha=0$ we chose $X_\alpha$ to be a periodic orbit.
Let $\varphi : X \to \bbR$ be $\varphi(x) = dist(x, X_\alpha)$. Let $\{\varphi_k\}_{k \ge 1}$ be a dense sequence in $C(X,\bbR)$.
We insert $\varphi$ into the sequence $\{\varphi_k\}_{k \ge 1}$ as $\varphi_1$ and set $\mathcal{R} = {\rm Rot}(\varphi_1,...,\varphi_{m})$. 
Consider the intersection of the hyperplane $\{x_1 = 0 \}$ and $\mathcal{R}$. 
This intersection is a singleton which we denote by $w$. Then $w \in \partial \mathcal{R}$. Note that the rotation class $\mathcal{M}_{\Phi}(w)$ of $w$ is a singleton which is the unique invariant measure on $X_\alpha$. Therefore, $h(\mathcal{M}_{\Phi}(w)) =\{ \alpha\}$.
\end{proof}

\begin{lem}\label{lem25}
There exist a rotation set $\mathcal{R} \subset \bbR^m$ and a rotation vector $w$ not in the (relative) interior of $\mathcal{R}$ such that $h(\mathcal{M}_{\Phi}(w)) = [0, H(w)]$ with $H(w)>0$.
\end{lem}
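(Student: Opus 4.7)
My plan is to exhibit a rotation vector $w$ on the relative boundary of $\mathcal{R}$ whose rotation class coincides with the entire space of invariant measures supported on a proper subshift $Y \subsetneq X$ of positive topological entropy. Because the entropy spectrum of $\mathcal{M}(Y)$ is itself the full interval $[0, h_{\mathrm{top}}(f|_Y)]$, this immediately gives $h(\mathcal{M}_{\Phi}(w)) = [0, H(w)]$ with $H(w) > 0$.

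Concretely, I would take $d \geq 3$ and let $Y \subset X$ be the full shift on the first $d-1$ symbols, viewed as a closed $f$-invariant subset of $X$. Then $h_{\mathrm{top}}(f|_Y) = \log(d-1) > 0$ and periodic point measures are dense in $\mathcal{M}(Y)$. To verify $h(\mathcal{M}(Y)) = [0, \log(d-1)]$, I would let $\mu_{\max}$ be the ergodic measure of maximal entropy on $Y$ (the uniform Bernoulli measure) and let $\nu$ be any periodic point measure on $Y$; since $\mu_{\max}$ is fully supported on $Y$ while $\nu$ lives on a finite orbit, the two are mutually singular, and affineness of entropy under ergodic decomposition yields $h_{t\mu_{\max}+(1-t)\nu}(f) = t \log(d-1)$ for every $t \in [0,1]$. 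The reverse inclusion is immediate.

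Next, I would set $\varphi_1(x) = \mathrm{dist}(x, Y) \geq 0$, insert it as the first coordinate of $\Phi$, and take $\varphi_2, \dots, \varphi_m$ to be zero (or any fixed constants drawn from the dense family). Since $Y$ is closed, $\int \varphi_1 \, d\mu = 0$ iff $\mu(Y) = 1$ iff $\mu \in \mathcal{M}(Y)$. Because $\varphi_1$ has strictly positive integral against some invariant measure in $X \setminus Y$ (for instance the Dirac measure on the fixed point whose coordinates are all equal to the $d$-th symbol), the rotation set takes the form $\mathcal{R} = [0,b] \times \{0\}^{m-1}$ with $b > 0$. The vector $w = (0, \dots, 0)$ then lies in the relative boundary of $\mathcal{R}$ and satisfies $\mathcal{M}_{\Phi}(w) = \mathcal{M}(Y)$, so $h(\mathcal{M}_{\Phi}(w)) = [0, \log(d-1)] = [0, H(w)]$ with $H(w) > 0$, as required.

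The main subtlety is that $w$ lies on the (relative) boundary of $\mathcal{R}$, so Proposition \ref{lem_int} does not apply directly; instead one identifies the face $\{x_1 = 0\} \cap \mathcal{R}$ with the (degenerate) rotation set of the subsystem $(Y, f|_Y)$ and uses the explicit mutually-singular convex-combination argument to fill in every intermediate entropy. Aside from this conceptual point, the verification is routine.
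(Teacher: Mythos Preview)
Your proposal is correct and follows essentially the same idea as the paper: pick a proper invariant subsystem $Y\subset X$ with positive topological entropy, use $\varphi_1(x)=\mathrm{dist}(x,Y)$ so that the face $\{x_1=0\}$ of the rotation set corresponds exactly to $\mathcal{M}(Y)$, and then read off $h(\mathcal{M}_\Phi(w))=[0,h_{\mathrm{top}}(f|_Y)]$. The only cosmetic differences are that you take a specific $Y$ (the full $(d-1)$-shift, forcing $d\ge 3$) and set $\varphi_2,\dots,\varphi_m$ to constants, whereas the paper allows any transitive SFT $Y$ of positive entropy and draws the remaining coordinates from the dense sequence; your choice makes $\mathcal{R}$ a segment and $w$ a relative-boundary point, while the paper's $w$ sits on the boundary of a full-dimensional rotation set, but the lemma only asks for ``not in the (relative) interior,'' so either works. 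Your explicit convex-combination argument for $h(\mathcal{M}(Y))=[0,\log(d-1)]$ via affineness of entropy is a clean justification of a step the paper leaves implicit.
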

\begin{proof}
Let $Y \subset X$ be a transitive subshift of finite type of positive entropy. Consider the potential $\varphi : X \to \bbR$ given by $\varphi(x) = dist(x, Y)$.
As in the proof of the previous lemma, we insert $\varphi$ into a dense sequence $\{\varphi_k\}_{k \ge 1}$ as $\varphi_1$ and set $\mathcal{R}= {\rm Rot}(\varphi_1,...,\varphi_{m})$. Consider the intersection of the hyperplane $\{x_1 = 0 \}$ and $\mathcal{R}$. Again this intersection is a singleton and we denote it by $w$. Then $w \in \partial \mathcal{R}$. The rotation class of $w$ 
$$\mathcal{M}_{\Phi}(w) = \{ \mu \in \mathcal{M} : {\rm supp}(\mu) \subset Y  \}.$$
Therefore, $h(\mathcal{M}_{\Phi}(w)) = [0, H(w)]$. In this case, $H(w)$ equals the entropy of $(Y,f)$ which is positive.
\end{proof}
We note that in construction of Lemma \ref{lem25} we even obtain $h(\mathcal{M}_{\Phi}^E(w)) = [0, H(w)]$. This follows since for the transitive SFT $Y$ every entropy value between $0$ and $h_{\rm top}(Y)$ is attained by an ergodic measure.

\begin{lem}
There exist a rotation set $\mathcal{R} \subset \bbR^m$ and a rotation vector $w$ not in the (relative) interior of $\mathcal{R}$ such that $h(\mathcal{M}_{\Phi}(w)) = [x, H(w)]$ where $0< x < H(w)$.
\end{lem}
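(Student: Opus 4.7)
The strategy parallels Lemmas 2.4 and 2.5, but I replace $Y$ with a subshift all of whose invariant measures have entropy bounded below by a positive constant, while still realizing an entire interval of entropy values above that floor.

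First, to construct $Y$, let $X_\alpha \subset X$ be a minimal uniquely ergodic subshift of positive entropy $\alpha > 0$ (cf.\ \cite{Gr}, as already used in Lemma 2.4), and let $Z$ be a transitive subshift of finite type of positive topological entropy $\beta$. Viewing the Cartesian product $X_\alpha \times Z$ under the diagonal shift as a subshift on the product alphabet, I re-encode it as a subshift $Y \subset X$, choosing $d$ large enough for the embedding (the statement of the lemma allows any $d \geq 2$).

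Next, I would show that $h(\{\mu \in \mathcal{M} : {\rm supp}(\mu) \subset Y\}) = [\alpha, \alpha + \beta]$. For every invariant $\mu$ supported on $Y$, the pushforward $(\pi_1)_*\mu$ onto $X_\alpha$ is forced by unique ergodicity to equal the unique invariant measure $\mu_\alpha$ of entropy $\alpha$, so the factor-entropy inequality gives $h_\mu(f) \geq h_{\mu_\alpha}(f) = \alpha$; the joining bound $h_\mu(f) \leq h_{\mu_\alpha}(f) + h_{(\pi_2)_*\mu}(f) \leq \alpha + \beta$ supplies a matching upper bound. Conversely, for each $\gamma \in [0,\beta]$ there exists $\nu_\gamma \in \mathcal{M}(Z)$ with $h_{\nu_\gamma}(f) = \gamma$ (since $Z$ is a transitive SFT, as recorded after Lemma \ref{lem25}), and the product $\mu_\alpha \otimes \nu_\gamma$ has entropy $\alpha + \gamma$; affinity of entropy on convex combinations then fills in the interval.

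Finally, for the rotation-set setup, I would define $\varphi(x) = \text{dist}(x, Y)$, take $\varphi_1 = \varphi$ and $\varphi_2 = \cdots = \varphi_m \equiv 0$, and extend to a dense sequence $\{\varphi_k\}_{k \geq 1}$ in $C(X,\bbR)$. Then $\mathcal{R} = {\rm Rot}(\varphi_1, \ldots, \varphi_m) = [0,b] \times \{0\}^{m-1}$ for some $b > 0$, and $w = (0, \ldots, 0)$ lies on the relative boundary of $\mathcal{R}$. Since $\int \varphi \, d\mu = 0$ if and only if ${\rm supp}(\mu) \subset Y$, we get $\mathcal{M}_\Phi(w) = \{\mu \in \mathcal{M} : {\rm supp}(\mu) \subset Y\}$, whence $h(\mathcal{M}_\Phi(w)) = [\alpha, \alpha + \beta] = [x, H(w)]$ with $x = \alpha > 0$ and $H(w) = \alpha + \beta$. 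The main obstacle is securing the entropy lower bound $h_\mu \geq \alpha$: this is precisely where unique ergodicity of $X_\alpha$ does all the work, forcing every joining on $Y$ to inherit the full entropy of its $X_\alpha$-marginal. Once that bound is in place the rest is a routine adaptation of the earlier examples, with the zero-padding of $\varphi_2, \ldots, \varphi_m$ making the construction uniform in $m$.
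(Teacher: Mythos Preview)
Your construction is correct, but it takes a genuinely different route from the paper. The paper simply lets $Y = Y_1 \cup Y_2$ be the \emph{disjoint union} of two minimal uniquely ergodic subshifts with distinct positive entropies $h_1 < h_2$; then every invariant measure supported on $Y$ is a convex combination $t\mu_1 + (1-t)\mu_2$, so the entropy spectrum is exactly $[h_1,h_2]$ by affinity of entropy. By contrast, you build $Y$ as a \emph{product} $X_\alpha \times Z$ and extract the lower bound $\alpha$ from the factor-entropy inequality via unique ergodicity of the first coordinate, and the upper bound from the joining bound. Both arguments are sound; the paper's is more elementary (no entropy theory of joinings, and the simplex of invariant measures on $Y$ is just a segment), while yours produces a $Y$ with a genuinely rich (Poulsen-type) simplex of invariant measures and, as a bonus, realizes the full interval $[\alpha,\alpha+\beta]$ already by product measures rather than only by convex combinations. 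Your zero-padding $\varphi_2=\cdots=\varphi_m\equiv 0$ also sidesteps a small sloppiness in the paper's version, where it is tacitly assumed that the two ergodic measures on $Y_1,Y_2$ share the same integrals against $\varphi_2,\dots,\varphi_m$ so that $\{x_1=0\}\cap\mathcal{R}$ is a single point.
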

\begin{proof}
Let $Y \subset X$ be the union of two disjoint minimal and uniquely ergodic subshifts $Y_1$ and $Y_2$ with positive but distinct entropies. For $i=1,2$, denote by $h_i$ the entropy of $Y_i$ and assume without loss of generality that $h_1 < h_2$. Let $\varphi : X \to \bbR$ be the potential given by $\varphi(x) = dist(x, Y)$. Again we insert $\varphi$ into a dense sequence $\{\varphi_k\}_{k \ge 1}$ as $\varphi_1$ and set $\mathcal{R} = {\rm Rot}(\varphi_1,...,\varphi_{m})$. As in the previous proof, there is exactly one intersection point $w \in \bbR^m$ of the hyperplane $\{x_1 = 0 \}$ and $\mathcal{R}$. Then $w \in \partial \mathcal{R}$. The rotation class of $w$ 
$$\mathcal{M}_{\Phi}(w) = \{ t\mu_1 + (1-t)\mu_2 : t \in [0,1]  \}$$
where $\mu_i$ is the unique invariant measure on $Y_i, i=1,2$.
Therefore, we have $h(\mathcal{M}_{\Phi}(w)) = [h_1, h_2]$ where $0< h_1<h_2$.
\end{proof}

Let $f:X\to X$ be a continuous map on a compact metric space $(X,d)$ and let $\mu\in \cM$. We say $\mu$ can be approximated by periodic point measures if there exists $(\mu_k)_{k\in\bN}\subset \cM_{\rm per}$ with $\mu_k\not=\mu$  and $\mu_k\to \mu$ in the weak$^\ast$ topology as $k\to \infty$.
\begin{prop} \label{prop_dense}
Let $f:X\to X$ be a continuous map on a compact metric space. 
Suppose $\mu_0\in \cM$ is a measure which can be approximated by periodic measures. Let $m\in \bN$. Then the set
\[
CO(\mu_0)=\{\Phi\in C(X,\bR^m): \rv(\mu_0)\in {\rm int}({\rm Rot}(\Phi))\}
\]
is open and dense in $C(X, \bbR^m)$.
\end{prop}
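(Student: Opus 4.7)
For openness, the plan is to show the complement of $CO(\mu_0)$ is closed via supporting hyperplanes: $\Phi$ lies outside $CO(\mu_0)$ if and only if $\rv_\Phi(\mu_0)\in\partial{\rm Rot}(\Phi)$, equivalently if and only if there exists $v\in S^{m-1}$ such that $\mu_0$ is a maximizer or minimizer of $\mu\mapsto\int v\cdot\Phi\,d\mu$ over $\cM$. Given $\Phi_n\to\Phi$ in sup norm with each $\Phi_n$ in the complement, compactness of $S^{m-1}$ allows extracting $v_n\to v$, and weak-$\ast$ compactness of $\cM$ together with uniform convergence of the $\Phi_n$ then lets us pass to the limit in the extremizer inequality, placing $\Phi$ in the complement.

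For density, fix $\Phi_0$ and $\epsilon>0$. The plan is to construct a small perturbation $\psi$ such that, for $\Psi:=\Phi_0+\psi$, the vector $\rv_\Psi(\mu_0)$ lies at the centroid of a non-degenerate $m$-simplex whose vertices are rotation vectors of periodic measures in ${\rm Rot}(\Psi)$. Since distinct periodic measures have disjoint orbits, the approximating sequence yields infinitely many distinct periodic orbits accumulating at $\mu_0$; and since any $f$-invariant probability measure has atoms only on periodic orbits (iterating invariance on an atom forces its forward orbit to be finite), $\mu_0$ has at most countably many atomic orbits. Select $m+1$ periodic measures $\nu_1,\dots,\nu_{m+1}$ with pairwise disjoint orbits $O_j$ disjoint from the atomic support of $\mu_0$ (so $\mu_0(O_j)=0$) and sufficiently late in the sequence that $b_j:=\rv_{\Phi_0}(\nu_j)-\rv_{\Phi_0}(\mu_0)$ has norm less than a small $\rho>0$. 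By outer regularity, pick pairwise disjoint open neighborhoods $U_j\supset O_j$ with $\alpha_j:=\mu_0(U_j)$ satisfying $\sum_j\alpha_j<1/2$, and by Urysohn choose $\eta_j\in C(X,[0,1])$ with $\eta_j\equiv 1$ on $O_j$ and ${\rm supp}(\eta_j)\subset U_j$, so that $\int\eta_j\,d\nu_k=\delta_{jk}$ (Kronecker) and $\int\eta_j\,d\mu_0=\alpha_j$. Choose vectors $q_1,\dots,q_{m+1}\in\bR^m$ of norm at most $\rho$ forming a non-degenerate $m$-simplex with centroid $0$, set $\psi_i:=\sum_j c_{ij}\eta_j$, and solve $\int\psi_i\,d(\nu_j-\mu_0)=(q_j-b_j)_i$ for $c_{ij}$; the explicit solution $c_{ij}=(q_j-b_j)_i+\frac{\sum_k(q_k-b_k)_i\alpha_k}{1-\sum_k\alpha_k}$ satisfies $|c_{ij}|=O(\rho)$, so $\|\psi_i\|_\infty=\max_j|c_{ij}|$ (using disjoint supports of the $\eta_j$) and $\|\psi\|_\infty<\epsilon$ for $\rho$ sufficiently small. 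By construction $\rv_\Psi(\nu_j)-\rv_\Psi(\mu_0)=q_j$, so $\rv_\Psi(\mu_0)$ is the centroid of a non-degenerate $m$-simplex whose vertices lie in ${\rm Rot}(\Psi)$, and hence lies in the interior of ${\rm Rot}(\Psi)$.

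The main obstacle is a latent linear-dependence issue: the explicit solution for $c_{ij}$ tacitly assumes $\{\nu_j-\mu_0\}$ are linearly independent as signed measures on $X$, which fails precisely when $\mu_0$ lies in the affine hull of $\{\nu_j\}$. Generically this is avoided by choosing the $\nu_j$ from the infinite approximating sequence so that $\mu_0\notin{\rm aff}\{\nu_j\}$; in the exceptional case one replaces the simplex by a symmetric cross-polytope of $2(m+1)$ periodic measures, whose compatibility conditions survive any single linear relation.
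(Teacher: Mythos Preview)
Your argument is essentially correct and close in spirit to the paper's, but your final ``obstacle'' paragraph is a phantom worry that you should simply delete. The linear system you set up is $(I-\mathbf{1}\alpha^{T})c_i=r_i$ (a rank-one perturbation of the identity), and by Sherman--Morrison this is invertible precisely when $\sum_j\alpha_j\neq 1$. You already arranged $\sum_j\alpha_j<1/2$, so your explicit formula for $c_{ij}$ is valid with no further hypothesis on $\mu_0$ versus the $\nu_j$. Linear dependence of $\{\nu_j-\mu_0\}$ as signed measures would force $\mu_0$ to be a \emph{convex} (not merely affine) combination of the mutually singular $\nu_j$, which in turn would force $\sum_j\alpha_j=1$; your construction already rules this out. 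So drop the cross-polytope detour.

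Two small cleanups. First, you conflate $\alpha_j:=\mu_0(U_j)$ with $\int\eta_j\,d\mu_0$; the latter is what actually enters the linear system, and it is $\leq\mu_0(U_j)$, so the bound $\sum_j\alpha_j<1/2$ still holds. Second, the step ``choose $O_j$ disjoint from the atomic support of $\mu_0$'' is not obviously achievable (all orbits in the approximating sequence could in principle be $\mu_0$-atoms), but it is also unnecessary: since the infinitely many $O_k$ are disjoint, $\sum_k\mu_0(O_k)\leq 1$, so all but finitely many satisfy $\mu_0(O_k)<\frac{1}{2(m+1)}$; pick those and use outer regularity.

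Comparison with the paper: for openness, the paper simply cites Hausdorff continuity of $\Phi\mapsto\mathrm{Rot}(\Phi)$, whereas you give a self-contained supporting-hyperplane/compactness argument; both are fine. For density, the paper uses the same ingredients (finitely many nearby periodic orbits, Urysohn bumps, smallness of $\mu_0$ on the bump supports) but argues geometrically: it shifts the periodic rotation vectors so that a ball around $\rv_\Phi(\mu_0)$ sits inside their convex hull, then observes that $\rv_\Psi(\mu_0)$ moves only slightly because $\mu_0(V_i)$ is small. Your version replaces that estimate by exact linear algebra, prescribing $\rv_\Psi(\nu_j)-\rv_\Psi(\mu_0)=q_j$ on the nose. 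The paper's route is shorter; yours gives sharper control and avoids the preliminary reduction to $\mathrm{int}(\mathrm{Rot}(\Phi))\neq\emptyset$.
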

\begin{proof}
The assertion that $CO(\mu_0)$ is open in $C(X, \bbR^m)$ follows from the fact that rotation sets are compact and convex sets which depend continuously on the potential $\Phi$ in the Hausdorff metric.

Next we assume that $\Phi \in C(X, \bbR^m)$ with $\rv(\mu_0)\not\in {\rm int}({\rm Rot}(\Phi)).$ It follows from the hypothesis that $\mu_0$ can be approximated by periodic 
point measures that $\cM_{\rm per}$  is infinite. Thus, by making an arbitrary small perturbation of the potential $\Phi$ near $m+1$ periodic orbits if necessary, we may assume that 
${\rm int}({\rm Rot}(\Phi)) \not=\emptyset.$ Hence $\rv(\mu_0)\in \partial  {\rm Rot}(\Phi)$. Fix $\epsilon>0$. We only consider the case $m=2$. The case $m>2$ can be proven analogously. Since  $\mu_0$ can be approximated by periodic  measures there are periodic points $x_1,x_2,x_3$ and associated distinct periodic point measures $\mu_1,\mu_2,\mu_3$
such that $\mu_i\not=\mu_0$  and $\Vert\rv(\mu_i)-\rv(\mu_0)\Vert<\frac{\epsilon}{3}$ for $i=1,2,3$. Therefore, there exists $v_1,v_2,v_3\in \bR^2$ with $\Vert v_i\Vert<\frac{\epsilon}{2}$ and $r>0$ such that the ball $B(\rv(\mu_0),r)$ about $\rv(\mu_0)$ with radius $r$ is contained in the interior of the  triangle with corners $\rv(\mu_i)+v_i, i=1,2,3$. Let $V_i$ be small open neighborhoods in $X$ of  the periodic orbits  $O(x_i)$. By applying Urysohn's lemma there exists $\Phi_\epsilon:X\to\bR^2$ such that $\Vert \Phi_\epsilon\Vert <\frac{\epsilon}{2}$, $\Phi_\epsilon(x)=v_i$ for all $x\in O(x_i)$ and $\Phi_{\epsilon}(x)= 0$ for all $x\in X\setminus (V_1\cup V_2\cup V_3)$. We define $\Psi=\Phi+\Phi_{\epsilon}$.
Since the Borel measure $\mu_0$ differs with the periodic point measures $\mu_i$, we can make $\mu_0(V_i)$ as small as necessary  by decreasing the seize of the neighborhoods $V_i$. It follows that if the sets $V_i$ are small enough, then $\rv_{\Psi}(\mu_0)\in  B(\rv(\mu_0),r)$. We conclude that $\Vert \Phi-\Psi\Vert <\epsilon$ and $\rv_\Psi(\mu_0)\in {\rm int}({\rm Rot}(\Psi))$. This shows that $CO(\mu_0)$ is dense in $C(X,\bR^m)$.
\end{proof}

We end this section by presenting the proof of Theorem \ref{thm_2}.

\begin{proof}[Proof of Theorem \ref{thm_2}]
We need to show that for each $\ell \ge m+1$, there exists an open and dense subset $S_{\ell}$ of $C(X,\bbR)$ which satisfies the desired property. The openness follows from the fact that the rotation set ${\rm Rot}(\ell)$ depends continuously in the Hausdorff metric on the $\ell$ potentials $(\varphi_1,...,\varphi_{\ell})$. 

We prove the denseness by induction. Let $\ell = m+1$. Choose any $\varphi_{m+1} \in C(X, \bbR)$. We are done if $(w, \int \varphi_{m+1}d\mu)$ is in the interior of ${\rm Rot}(\varphi_1,...,\varphi_{m+1})$. Otherwise suppose $(w, \int \varphi_{m+1}d\mu)$ lies on the boundary of ${\rm Rot}(\varphi_1,...,\varphi_{m+1})$. Since $\mu$ is ergodic and $\mathcal{M}_{\rm per}$ is dense in $\mathcal{M}_E$, $\mu$ can be approximated by periodic measures. By the same surgery argument as in the proof of Proposition \ref{prop_dense}, we can modify the potential $\varphi_{m+1}$ so that the modified potential $\varphi'_{m+1}$ is close to $\varphi_{m+1}$ in $C(X,\bbR)$ and $(w, \int \varphi'_{m+1}d\mu)$ becomes an interior point of ${\rm Rot}(\varphi_1,..., \varphi'_{m+1})$.

Therefore  there exists a dense subset $S_{m+1}$ in $C(X,\bbR)$ such that the point $(w, \int \varphi_{m+1}d\mu)$ is in the interior of ${\rm Rot}(m+1) = {\rm Rot}(\varphi_1,...,\varphi_{m},\varphi_{m+1})$ where $\varphi_{m+1} \in S_{m+1}$. By Theorem \ref{thm_one} part (a), the entropy spectrum of the rotation class of $(w, \int \varphi_{m+1}d\mu)$ contains $[0,h_\mu(f)]$. For $\ell \ge m+2$, the proof follows by induction using the same argument.
\end{proof}

\section{Ergodic entropy spectrum of a rotation class}
Finally, we prove Theorem \ref{thm_erg}. 

\begin{proof}[Proof of Theorem \ref{thm_erg}]
Let $w \in {\rm ri}({\rm Rot}(m))$. Since $\mathcal{M}_{per}$ is dense in $\mathcal{M}$, there exists $\mu_w$ in the rotation class of $w$ such that $$\mu_w = c_1\mu_1 + ... +c_{m+1}\mu_{m+1}$$
for some positive real numbers $c_i$ and periodic point measures $\mu_i$. Consider the potential $\varphi: X \to \bbR$ given by $\varphi(x) = {\rm dist}(x, X')$ where $X'$ is the union of the support of the $\mu_i$. We note that $\varphi$ is H\"older continuous. Insert $\varphi$ into the dense sequence $\{\varphi_k\}_{k \ge 1}$ as $\varphi_{m+1}$. Then the set of $(m+1)$-th coordinates of the rotation set ${\rm Rot}(m+1)$ $$I = \left \{ \int_X \varphi d\mu : \mu \in \mathcal{M}_{\Phi}(w)\right \}$$ is a compact interval $[0,b]$ for some $b>0$.

By a slight abuse of notation we also denote the line segment in $\bbR^{m+1}$ connecting $(w,0)$ to $(w,b)$ by $I$. We consider the localized entropy function $H$ restricted to $I$. At the endpoint $(w,0)$ of $I$ we have $H(w, 0) = 0$. 
Let $\mu_{\rm max}\in \cM_\Phi^E(w)$ with $h_{\mu_{\rm max}}(f)=H(w)$. The existence of $\mu_{\rm max}$ follows from the fact that the set of entropy maximizing measures within a 
rotation class is a compact and convex set whose extreme points are the ergodic measures.
Then $x_{\rm max}=\int \varphi d\mu_{\rm max} \in (0,b]$. It  follows from a  theorem of Jenkinson (\cite{Jenkinson} that for each $s\in (x_{\rm max},b) $ there exists a linear combination
of the potentials $\varphi_1,\dots,\varphi_{m+1}$ such that the corresponding equilibrium measure $\mu_s$ satisfies $\rv(\mu_s)=s$ and $h_{\mu_s}(f)=H(s)$. It now follows from
the fact that equilibrium measures are ergodic and from the continuity of the localized entropy function that $(0,H(w)]\subset h(\mathcal{M}_{\Phi}^E(w))$.

Since $\mathcal{M}_{per}$ is dense in $\mathcal{M}$, there exists a dense subset $\mathcal{D}$ of ${\rm ri}({\rm Rot}(m))$ such that if $w \in \mathcal{D}$, the rotation class of $w$ contains a periodic point measure whose support is denoted by $X''$. Taking $\varphi_{m+1} = dist(x,X'')$ and applying the same argument as above, we obtain $h(\mathcal{M}_{\Phi}^E(w)) = [0,H(w)]$ for any $w \in \mathcal{D}$.
\end{proof}

Finally, we present an example of a dynamical system and a good potential $\Phi$ such that the ergodic entropy spectrum of $w \in {\rm int}({\rm Rot}(\Phi))$ is the union of a nontrivial interval and a finite set of points.
Our example is inspired by Example 2 in Section 3 of \cite{Petersen}.

\begin{example} \label{example}
Let $Y$ be the  full shift on $d$ letters $\{0,...,d-1\}$ where $d \ge 2$ is an integer. Let $Y_1, Y_2 \subset Y$ be two disjoint minimal and uniquely ergodic subshifts with distinct strictly positive entropies. As noted before examples of such shifts are constructed in \cite{Gr}. 
	
In order to describe our dynamical system $(X,\sigma)$, we construct a directed labeled graph $\Gamma$.
Let $\Gamma$ be an irreducible labeled directed graph with a countable set of vertices $\{v_0, v_{11}, v_{21}, v_{12},v_{22}, \dots \}$ as shown in Figure \ref{fig_ex}. We note that in $\Gamma$, $x_{ij}$ is the unique directed edge pointing to the vertex $v_{ij}$ for all $i=1,2$ and $j \ge 1$. We assign each edge $x_{ij}$ a letter in $\{0,...,d-1\}$ in such a way that for $i=1,2$, the orbit of the point $x_{i1}x_{i2}x_{i3}\dots$ is dense in the minimal shift $Y_i$ respectively.
	
For $i=1,2$, we will construct directed edges $y_{ij}$ from $v_{ij}$ to $v_0$ for certain values of $j \ge 1$ so that the dynamical system $(X,\sigma)$  satisfies an entropy inequality specified below. We will describe the edges $y_{ij}$ later.
	
For $i = 1,2$ and $j \ge 1$, define $$g_{ij} = x_{i1}x_{i2}...x_{ij}y_{ij}$$
	if $g_{ij}$ represents a loop from $v_0$ to $v_0$ in $\Gamma$ which does not go through $v_0$ in between; otherwise $g_{ij}$ is defined to be the empty string.
	For $i = 1,2,$ let $\mathcal{G}_i = \{ g_{ij}: j \ge 1 \}$ and let $\mathcal{G}  =\mathcal{G}_1 \cup \mathcal{G}_2$. Following \cite{BDWY} we define $$X_{{\rm seq}}(\mathcal{G}_i) \eqdef \{...g_{k_{-2}}g_{k_{-1}}g_{k_{0}}g_{k_{1}}g_{k_{2}}... : g_{k_j} \in \mathcal{G}_i, j \in \bbZ\}$$
	and
	$$X_{{\rm seq}}(\mathcal{G}) \eqdef \{...g_{k_{-2}}g_{k_{-1}}g_{k_{0}}g_{k_{1}}g_{k_{2}}... : g_{k_j} \in \mathcal{G}, j \in \bbZ\}.$$
	Let $X_i$ be the topological closure of $X_{{\rm seq}}(\mathcal{G}_i)$; that is, $X_i$ is the smallest shift space that
	contains $X_{{\rm seq}}(\mathcal{G}_i)$. Similarly, let $X$ be the closure of $X_{{\rm seq}}(\mathcal{G})$.
	
	For $i =1,2$ consider the spaces $$X_{{\rm min}}(i) = X_i \setminus X_{{\rm seq}}(\mathcal{G}_i).$$ We note that $X_{{\rm min}}(i) = Y_i$. Therefore they are two disjoint minimal and uniquely ergodic subshifts with positive but different entropies in $X$.
	
	If we set 
	$$\mathcal{G}_n \eqdef \{g_{11},...,g_{1n},g_{21},...,g_{2n} \}$$ and consider the sofic shifts $X(\mathcal{G}_n)$ of $X_{\rm seq}(\mathcal{G}_n)$, then we can define the directed edges $y_{ij}$ in such a way that the topological entropies $h_{\rm top}(X(\mathcal{G}_n))$ of $X(\mathcal{G}_n)$ satisfy the following inequality
	$$\sup_n h_{\rm top}(X(\mathcal{G}_n)) < \min\{h_{\rm top}(X_{\rm min}(1))  ,h_{\rm top}(X_{\rm min}(2)) \}.$$
	Indeed, if we denote by $\mathcal{L}_m(X(\mathcal{G}_n))$ the set of words of length $m$ that appear somewhere in $X(\mathcal{G}_n)$, then the topological entropy of  $X(\mathcal{G}_n)$ is given by $$h_{\rm top}(X(\mathcal{G}_n)) = \lim_{m \to \infty} \frac{1}{m} \log \# \mathcal{L}_m (X(\mathcal{G}_n)).$$ For $i=1,2$, let $j_i^*$ denote the integer for which $y_{ij_i^*}$ is the first directed edge from $v_{ij_i^*}$ to $v_0$. For $j > j_i^*$, we construct an edge $y_{1j}$ for $j$ being an odd multiple of $j_i^*$ and $y_{2j}$ for $j$ being an even multiple of $j_i^*$.
	We can choose $j_i^*$ large enough so that
	$$h_{\rm top}(X(\mathcal{G}_n)) < \min\{h_{\rm top}(X_{\rm min}(1))  ,h_{\rm top}(X_{\rm min}(2)) \} -\epsilon$$ for some $\epsilon>0$ and all $n \ge 1$. This follows from the fact that $h_{\rm top}(X(\mathcal{G}_n)) \to 0$ as $j_i^* \to \infty$.

	Now we construct a good H\"older continuous potential $\Phi: X \to \bbR^m$ where $m$ is a fixed positive integer. Let $w_0 \in \bbR^m$ and let $w_1, w_2,..., w_{m+1} \in \bbR^m$ such that $w_0$ can be written as a convex combination of $w_1, w_2,..., w_{m+1}$. Let $P_0,P_1,...,P_{m+1}$ be periodic orbits in $X_{\rm seq}$. We define $\Phi$ as follows. Define $$\Phi |_{X_{\rm min}(1) \cup X_{\rm min}(2)} = w_0,$$ and for each $i=0,1,...,m+1$ define $$ \Phi |_{P_i} = w_i.$$ Let $V_i$ be a small neighborhood of $P_i$ such that $V_i$ does not intersect $X_{\rm min}(1) \cup X_{\rm min}(2)$. By using Urysohn's lemma to define $$\Phi |_{(X \setminus \cup_j V_j)} = w_0.$$ Moreover, we may assume that $\Phi$ is H\"older continuous. 
	
It follows that $\Phi$ is a good H\"older potential and that $w_0$ is in the interior of ${\rm Rot}(\Phi)$. Moreover, the ergodic entropy spectrum $h(\mathcal{M}^E_{\Phi}(w_0))$ of $w_0$ is contained in $$ES=[0, \alpha] \dot\cup \{h_{\rm top}(X_{\rm min}(1)), h_{\rm top}(X_{\rm min}(2))\}$$ for some $0<\alpha\leq \sup_n h_{\rm top}(X(\mathcal{G}_n))$.
Moreover, $ES\setminus \{\alpha\}\subset h(\mathcal{M}^E_{\Phi}(w_0))$.
Indeed,  that $\alpha>0$ follows from applying 
Theorem  \ref{thm_one} (a)  to $X(\cG_n)$ for $n$ sufficiently large that the periodic orbits $P_0,P_1,...,P_{m+1}$ are contained in $X(\cG_n)$.
Moreover, that every value $0\leq h<\alpha$  is attained in $h(\mathcal{M}^E_{\Phi}(w_0))$ can be shown by applying similar arguments as in the proof of Theorem \ref{thm_erg} to the sofic shifts $X(\cG_n)$. 
		
	In fact, one can generalize this construction so that the ergodic entropy spectrum of $w_0$ is contained in the disjoint union of $[0, \alpha]$ for some $\alpha>0$  and a set of $k$ points for any $3 \le k <\infty$. To this end, we modify the graph $\Gamma$ so that instead of having $2$ branches, it has $k$ branches. The result follows by applying the same construction as above.

	
\end{example}
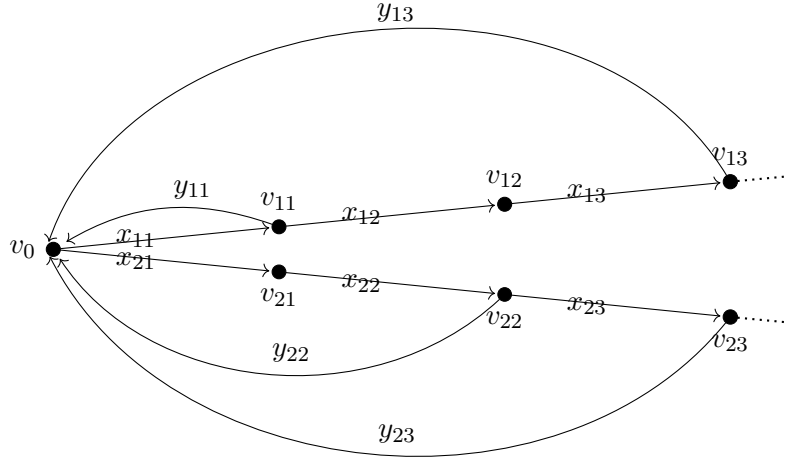
\begin{figure} [h!]
	\begin{tikzpicture}
	\node[shape=circle,draw=black,fill=black,scale=0.25,label=left:{$v_0$}] (A) at (-1,0) {A};
	\node[shape=circle,draw=black,fill=black,scale=0.25,label=above:{$v_{11}$}] (B) at (2,0.3) {B};
	\node[shape=circle,draw=black,fill=black,scale=0.25,label=above:{$v_{12}$}] (C) at (5,0.6) {C};
	\node[shape=circle,draw=black,fill=black,scale=0.25,label=above:{$v_{13}$}] (D) at (8,0.9) {D};
	\node[shape=circle,draw=black,fill=black,scale=0.25,label=below:{$v_{21}$}] (E) at (2,-0.3) {E};
	\node[shape=circle,draw=black,fill=black,scale=0.25,label=below:{$v_{22}$}] (F) at (5,-0.6) {F} ;
	\node[shape=circle,draw=black,fill=black,scale=0.25,label=below:{$v_{23}$}] (G) at (8,-0.9) {G};
	\node[shape=circle,draw=none,fill=none,scale=0.25] (X) at (8.8,0.975) {};
	\node[shape=circle,draw=none,fill=none,scale=0.25] (Y) at (8.8,-0.975) {};
	
	\path [->, shorten  >=1pt] (A) edge node[left] {$x_{11}$} (B);
	\path [->, shorten  >=3pt] (B) edge[bend right=25] node[above right]  {$y_{11}$} (A.30);
	\path [->, shorten  >=1pt](B) edge node[left] {$x_{12}$} (C);
	\path [->, shorten  >=1pt](C) edge node[left] {$x_{13}$} (D);
	\path [->, shorten  >=3pt] (D) edge[bend right=65] node[above right]  {$y_{13}$} (A.180);
	\path [-] (D) edge[dotted,thick=0.9] (X);
	
	\path [->, shorten  >=1pt] (A) edge node[left] {$x_{21}$} (E);
	\path [->, shorten  >=1pt](E) edge node[left] {$x_{22}$} (F);
	\path [->, shorten  >=7pt] (F) edge[bend left=50] node[above right]  {$y_{22}$} (A.120);
	\path [->, shorten  >=1pt](F) edge node[left] {$x_{23}$} (G);
	\path [->, shorten  >=3pt] (G) edge[bend left=57] node[above right]  {$y_{23}$} (A.180); 
	\path [-] (G) edge[dotted,thick=0.9] (Y);
	\end{tikzpicture}
	\caption{\label{fig_ex} A variant of Peterson's example. Note that the edges $y_{ij}$ in the figure are only for illustration purposes. The first directed edge $y_{ij}$ going back from $v_{ij}$ to $v_0$ occurs for a large $j$.}
\end{figure}

\end{document}